\newtheorem{theorem}{Theorem}[section]
\newtheorem{lemma}[theorem]{Lemma}
\newtheorem{prop}[theorem]{Proposition}
\newtheorem{question}[theorem]{Question}
\newtheorem{cor}[theorem]{Corollary}
\theoremstyle{definition}                   
\newtheorem{remark}[theorem]{Remark}
\newtheorem{defi}[theorem]{Definition}
\newtheorem{notation}[theorem]{Notation}
\newcommand{\cont}{\mathfrak{c}}
\newcommand{\N}{\mathbb{N}}
\newcommand{\Q}{\mathbb{Q}}
\newcommand{\R}{\mathbb{R}}
\newcommand{\om}{\omega}
\newcommand{\si}{\sigma}
\newcommand{\de}{\delta}
\def\beq{\begin{equation}}
\def\eeq{\end{equation}}
\newcommand{\bt}{\begin{theorem}}
\newcommand{\bl}{\begin{lemma}}
\newcommand{\br}{\begin{remark}}
\newcommand{\bd}{\begin{defi}}
\newcommand{\et}{\end{theorem}}
\newcommand{\el}{\end{lemma}}
\newcommand{\er}{\end{remark}}
\newcommand{\ed}{\end{defi}}
\newcommand{\bp}{\begin{proof}}
\newcommand{\ep}{\end{proof}}
\DeclareMathOperator{\cov}{cov}
\newcommand{\su}{\subset}
\newcommand{\RR}{\mathbb{R}}
\newcommand{\iG}{\mathcal{G}}
\newcommand{\iH}{\mathcal{H}}
\newcommand{\iI}{\mathcal{I}}
\newcommand{\iK}{\mathcal{K}}
\newcommand{\iM}{\mathcal{M}}
\newcommand{\iN}{\mathcal{N}}
\newcommand{\sm}{\setminus}
\newcommand{\la}{\lambda}
\newcommand{\ka}{\kappa}
\newcommand{\al}{\alpha}
\newcommand{\be}{\beta}
\renewcommand{\phi}{\varphi}
\title[Decomposing the real line]
{Decomposing the real line into Borel sets closed under addition}
\author{M\'arton Elekes}
\address{Alfr\'ed R\'enyi Institute of Mathematics\\
PO Box 127, 1364 Budapest, Hungary\\
and
Institute of Mathematics\\
E\"otv\"os Lor\'and University\\
P\'azm\'any P\'eter s.~1/c, 1117
Budapest, Hungary}
\email{elekes.marton@renyi.mta.hu}
\urladdr{http://www.renyi.hu/~emarci}
\thanks{We gratefully acknowledge the support of the
Hungarian Scientific Foundation grants no.~104178 and 83726.
}
\author{Tam\'as Keleti}
\address{Alfr\'ed R\'enyi Institute of Mathematics\\
PO Box 127, 1364 Budapest, Hungary\\
and
Institute of Mathematics\\
E\"otv\"os Lor\'and University\\
P\'azm\'any P\'eter s.~1/c, 1117
Budapest, Hungary}
\email{tamas.keleti@gmail.com}
\urladdr{http://www.cs.elte.hu/analysis/keleti}
\begin{document}
\begin{abstract}
We consider decompositions of the real line into pairwise disjoint Borel pieces so that each piece is closed under addition. How many pieces can there be? We prove among others that the number of pieces is either at most 3 or uncountable, and we show that it is undecidable in $ZFC$ and even in the theory $ZFC + c=\om_2$ if the number of pieces can be uncountable but less than the continuum. We also investigate various versions: what happens if we drop the Borelness requirement, if we replace addition by multiplication, if the pieces are subgroups, if we partition $(0,\infty)$, and so on.
\end{abstract}

\keywords{decomposition, partition, additive, closed, semigroup, continuum, cardinal invariant, translate, Borel, analytic, consistent, $ZFC$}

\subjclass[2010]{Primary: 03E15, 03E17, Secondary: 03E50, 28A05, 20M99, 54H05}

\maketitle

\section{Introduction}

We consider decompositions of the real line into Borel subsets with some additional algebraic structure. In this paper the words `decomposition' and `partition' will always refer to writing the real line as a union of pairwise disjoint sets. The main question is weather the number of pieces can be strictly between $\om$ and $\cont$. First, the question is interesting if we impose no algebraic structure at all. 
Lebesgue was the first one to show that the real line can be decomposed into $\om_1$ Borel sets, hence it is consistent with $ZFC$ that the number of pieces can be strictly between $\om$ and $\cont$. 
Later Hausdorff \cite{Ha} showed that there is in fact a partition into $\om_1$ many $F_{\si\de}$ sets. Whether the real line can be partitioned into $\om_1$ many $G_{\de\si}$ sets is already independent from $ZFC$, and even from $ZFC + c = \om_2$, see \cite{Mi} for the details.
Finally, J. Stern \cite{Sn} and independently A. W. Miller \cite{Mi} proved that consistently $\cont=\om_2$ and 
the real line can be partitioned into $\om_1$ many compact sets.

It is important to mention the following  remarkable theorem of Silver: If an equivalence relation on the line (considered as a subset of the plane) is Borel then there are countably many or continuum many equivalence classes \cite{Si}. This shows that in all these decompositions into $\ka \in (\om, \cont)$ many Borel pieces the relation of being in the same piece is fairly complicated. We will return to this issue in the Open questions section.

In this paper the algebraic assumption about the pieces will mostly be that each piece is closed under addition. In other words, we partition the real line into Borel additive semigroups. Since $(-\infty,0),\{0\}$ and $(0,\infty)$ are closed under addition, we can trivially decompose $\R$ into one, two or three Borel sets that are closed under addition. We show (Theorem~\ref{t:addstructure}) that in any other decomposition of the real line into Borel additive semigroups each semigroup must have Lebesgue measure zero and must be of first category. Therefore the number of pieces is either $1,2,3$ or uncountable (in fact, at least $\max(\cov\iM,\cov\iN)$). The sets of the form $c\cdot\Q^+$ clearly decompose $\R$ into continuum many sets that are closed under addition. So it is consistent with $ZFC$ that the set of the possible
number of pieces is $\{1,2,3,\cont\}$. Can this be proved in $ZFC$? 

Our main result (Theorem~\ref{t:main}) is that it is consistent with $ZFC$ that $\cont=\omega_2$ and $\R$ can be decomposed into $\om_1$ Borel sets that are closed under addition.

Now, in the remaining part of the introduction we say a few words about certain natural variations of the problem.

If we do not require the sets to be Borel then the problem is much easier and the answer is much simpler. Throughout the paper $\ka$ and $\la$ will denote cardinal numbers.

\begin{prop}
For any $1\le\ka\le\cont$ there exists a decomposition 
$\R=\bigcup_{\al<\ka} A_{\al}$ such that each $A_{\al}$ is nonempty and
closed under addition.
\end{prop}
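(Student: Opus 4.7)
The plan is to handle the cases $\kappa \le 3$ by inspection and all remaining cases via a Hamel basis. For $\kappa = 1, 2, 3$ I would take the partitions $\{\R\}$, $\{(-\infty, 0),\, [0, \infty)\}$, and $\{(-\infty, 0),\, \{0\},\, (0, \infty)\}$ respectively; each piece is manifestly closed under addition.

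For $\kappa \ge 4$, fix a Hamel basis $H$ of $\R$ over $\Q$ and well-order it as $(h_\xi)_{\xi < \cont}$. Every nonzero $x \in \R$ has a unique expansion $x = \sum_{i=1}^n q_i h_{\xi_i}$ with $q_i \in \Q \setminus \{0\}$ and $\xi_1 < \cdots < \xi_n$; set $\mathrm{lead}(x) = h_{\xi_n}$ and $\mathrm{sgn}(x) = \mathrm{sign}(q_n)$. The key observation, which I would verify directly, is that for every $S \subseteq H$ the set
\[
P_S = \{x \ne 0 : \mathrm{lead}(x) \in S,\ \mathrm{sgn}(x) = +\}
\]
is closed under addition, and analogously for $N_S$ defined using negative sign. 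Indeed, if $x, y \in P_S$ have distinct leads, the larger lead (still in $S$) carries into $x+y$ with its positive coefficient, while every other basis element in the expansion of $x+y$ lies strictly below; if the leads coincide, the coefficient of that common lead in $x+y$ is the sum of two positive rationals and hence still positive, with nothing larger appearing.

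To conclude, partition $H$ into nonempty pieces of the appropriate number. If $\kappa = n$ is finite, split $H$ into $n-2$ parts $H_0, \ldots, H_{n-3}$ and take the family $\{P_{H_0}, \ldots, P_{H_{n-3}},\, N_H,\, \{0\}\}$, which has $n$ elements. If $\kappa$ is infinite, split $H$ into $\kappa$ parts $(H_\alpha)_{\alpha < \kappa}$ and take $\{P_{H_\alpha} : \alpha < \kappa\} \cup \{N_{H_\alpha} : \alpha < \kappa\} \cup \{\{0\}\}$, a family of cardinality $2\kappa + 1 = \kappa$. Pairwise disjointness is automatic because the pair $(\mathrm{lead}, \mathrm{sgn})$ is well-defined on $\R \setminus \{0\}$, nonemptiness of each piece follows from the nonemptiness of the corresponding $H_\alpha$, and closure under addition is the observation above.

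I do not expect any serious obstacle. The real content is the one-line verification that restricting to the leading basis element and its sign yields an additive semigroup; once that is in hand, the rest is bookkeeping, including the cardinal arithmetic $2\kappa + 1 = \kappa$ for infinite $\kappa$. The only mildly delicate point is remembering that the whole Hamel expansion need not be ``coherent'' under addition — it is exactly the top term that is preserved (modulo matching signs), which is what makes the construction work.
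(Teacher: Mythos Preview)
Your proof is correct and uses essentially the same idea as the paper: pick a Hamel basis, well-order it, and classify each nonzero real by the pair (extreme basis index appearing, sign of that coefficient). The differences are cosmetic but worth noting. You take the \emph{largest} index while the paper takes the \emph{smallest}; either works for the same reason. More interestingly, instead of splitting by sign and invoking $2\kappa+1=\kappa$ for infinite $\kappa$ (and handling $\kappa\le 3$ separately), the paper observes that the single ``leftover'' set $C_{\beta}=\R\setminus\bigcup_{\alpha<\beta}B_\alpha$ is itself closed under addition, so one gets exactly $\kappa'+1$ pieces in one stroke (with $\kappa'=\kappa-1$ for finite $\kappa$ and $\kappa'=\kappa$ otherwise). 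This complement trick gives a uniform treatment of all $\kappa\ge 1$ with no case analysis, at the cost of a slightly less symmetric picture; your version is more symmetric but needs the bookkeeping you describe.
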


\bp
Let $H=\{h_\al\ : \al<\cont\}$ be a Hamel basis of $\R$.
For every $x\in\R\sm\{0\}$ take its unique representation in the form
$x=r_1 h_{\al_1} + \ldots +r_n h_{\al_n}$, where $r_1,\ldots,r_n\in\Q\sm\{0\}$
and $\al_1<\ldots<\al_n<\cont$ and
let $f(x)=\al_1$ and $g(x)=r_1$.
For any $\al<\cont$ let
$$
B_\al=\{x\ : f(x)=\al, g(x)>0\}.
$$
Clearly every $B_\al$ is nonempty and closed under addition.
The same is true for $C_\be=\R\sm(\cup_{\al<\be} B_\al)$ for any $\be\le\cont$
since $x\in C_\be \Leftrightarrow
       f(x)\ge \be \textrm{ or } g(x)<0 \textrm{ or } x=0$.

Let $\ka'$ be $\ka-1$ if $\ka$ is finite and $\ka$ otherwise.
Then $\R=C_{\ka'} \cup \bigcup_{\al<\ka'} B_\al$ is a decomposition
with the required properties.
\ep

If we consider multiplication instead of addition then we get similar results (both for the Borel and the non-Borel case), but the negative numbers cause some extra technical complications, see Theorem~\ref{t:multstructure} and Corollary~\ref{c:multkappa}. Hence it is perhaps more natural to decompose only $(0,\infty)$ into subsets that are closed under multiplication. Taking logarithm it is clear that such a decomposition is 
equivalent to a decomposition of $\R$ into sets that are closed under addition. Recently R. Freud \cite{F} raised the question if $(0,\infty)$ can be decomposed into two parts that are closed under \emph{both} addition and multiplication. It turned out that this had already been solved in 2007 by D. M. Kane, who had proved that such a decomposition exists. 
Another answer was given in \cite{KST}, where the authors also describe the structure of all such decompositions. They also show that for any $1\le\ka\le \cont$ one can decompose $(0,\infty)$ into $\ka$ sets that are closed under addition and multiplication.

So far we considered additive or multiplicative semigroups. It is natural to ask what happens if we require groups. Then of course we cannot hope to get decompositions since all groups contain the neutral element. So we should rather require that their intersection only contains the neutral element. In this case we say that the subgroups are \emph{essentially disjoint}.
But it turns out that even in that case the answer is fairly easy, even in a more general setting. Recall that $|X|$ denotes the cardinality of a set $X$.

\begin{prop}
If an infinite group $G$ is the union of $\ka$ many essentially disjoint 
subgroups  then $\ka=1$ or $\ka = |G|$.
\end{prop}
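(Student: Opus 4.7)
The plan is to show that if $\ka \ge 2$ then necessarily $\ka = |G|$. The strategy has two parts: first a normalization that excludes pieces equal to $\{e\}$ or to $G$ itself, and second a counting argument bounding $|H_\be| \le \ka$ for each $\be$.

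For normalization, write $G = \bigcup_{\al < \ka} H_\al$ with $H_\al \cap H_\be = \{e\}$ for $\al \neq \be$. Any piece equal to $\{e\}$ is redundant and can be discarded. If some $H_\al$ equals $G$, then essential disjointness forces every other $H_\be$ to equal $\{e\}$, which after discarding puts us in the case $\ka = 1$. I may therefore assume each $H_\al$ is a nontrivial proper subgroup and $\ka \ge 2$.

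For the counting bound, fix distinct indices $\al, \be$ and pick $x \in H_\al \sm \{e\}$. For any $y \in H_\be \sm \{e\}$ the product $xy$ cannot lie in $H_\al$ (else $y = x^{-1}(xy) \in H_\al \cap H_\be = \{e\}$) nor in $H_\be$ (by the symmetric argument), so $xy \in H_{\gamma(y)}$ for a unique index $\gamma(y) \notin \{\al, \be\}$. A short check shows that $y \mapsto \gamma(y)$ is injective: if $\gamma(y_1) = \gamma(y_2) = \gamma$ then $y_2^{-1}y_1 = (xy_2)^{-1}(xy_1) \in H_\be \cap H_\gamma = \{e\}$, so $y_1 = y_2$. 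Hence $|H_\be| \le 1 + \ka$.

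Summing, $|G| \le \sum_{\al < \ka}|H_\al| \le \ka^2$. If $\ka$ were finite then $G$ would be finite, contradicting the hypothesis; so $\ka$ is infinite and $|G| \le \ka$. The reverse inequality $\ka \le |G|$ follows by picking one nonidentity element from each nontrivial $H_\al$, and essential disjointness makes these elements distinct. Thus $\ka = |G|$. The only real subtlety is the normalization step, without which the statement is literally false (e.g.\ the covering $G = G \cup \{e\}$ gives $\ka = 2$); the main counting argument itself is short and uses only essential disjointness.
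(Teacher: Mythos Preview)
Your proof is correct and follows essentially the same route as the paper: both arguments hinge on the observation that a left coset $xH_\be$ (with $x\notin H_\be$) meets each other subgroup in at most one element, giving $|H_\be|\le\ka$, and then finish with elementary cardinal arithmetic. You are in fact a bit more careful than the paper about the degenerate cases $H_\al=\{e\}$ and $H_\al=G$, which the paper handles only implicitly.
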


\begin{proof}
Let $G = \bigcup_{\al<\ka} H_{\al}$ be a union consisting of essentially disjoint nonempty subgroups. Assume $\ka \ge 2$. Since the $H_{\al}$'s are essentially disjoint, $\ka$ cannot exceed $|G|$, so we only need to show that $\ka \ge |G|$. 

Let $\al < \ka$ and $g  \notin H_\al$. Then $|gH_\al \cap H_\beta| \le 1$ for every $\beta \neq \al$, since if $g_1, g_2 \in gH_\al \cap H_\beta$ then $g_1^{-1}g_2 \in H_\al \cap H_\beta = \{e\}$, hence $g_1 = g_2$. Since $gH_\al \cap H_\al = \emptyset$, the $H_\beta$'s cover $gH_\al$, hence $\ka \ge |H_\al|$. Repeating the argument for every $\al$ yields $\ka \ge \sup\{|H_\al| : \al < \ka\}$. Let $\la = \sup\{|H_\al| : \al < \ka\}$, then $\ka \ge \la$. If $\la = |G|$ then $\ka\ge\la=|G|$ and we are done, so let $\la < |G|$. Since the $H_\al$'s cover $G$, we obtain $\ka \la \ge |G|$, 
in particular, $\ka$ is infinite. Then $\ka \la = \ka$, therefore $\ka \ge |G|$, hence the proof is complete.
\end{proof}

We remark here that it is easy to check that if $\R$ is 
the union of essentially disjoint subgroups, then these groups are actually $\Q$-linear vector spaces. 
Hence there is a close connection to the theory of so called vector space partitions, which deals with the problem of how one can 
write a vector space as the union of essentially disjoint proper subspaces.
However, this theory mostly considers finite vector spaces, since one of the main motivations is the connection to error-correcting codes. See e.g. \cite{Be}, \cite{Bu}, \cite{EZ}, \cite{He} and the references therein for more details.

\section{Non-existence results concerning Borel decompositions}
\label{s:zfc}

In this section we prove structural results about decompositions into additive and/or multiplicative Borel semigroups, which in turn yield strong limitations about the possible number of pieces. All results of the section are proved in $ZFC$.

\begin{notation}
For any $A\su\R$, $n\in\N^+$ let $(n)A=\{a_1+\ldots+a_n :  a_n\in A\}$.
\end{notation}

The following lemma is clearly well-known, however, for the sake of completeness we include its proof.

\begin{lemma}\label{l:evensum}
Let $B\su\R$ be a Borel set and suppose that $B$ has positive Lebesgue 
measure or $B$ is of second category. 


(i) Then $\bigcup_{k=1}^\infty (2k)B$ contains a halfline.

(ii) If we also have $B+B\su B$ or $B+B+B\su B$ then $B$ contains a halfline.
\end{lemma}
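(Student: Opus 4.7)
The plan is to first establish, under either hypothesis on $B$, that the double sumset $(2)B=B+B$ contains a nonempty open interval, and then to bootstrap this to the full union $\bigcup_k(2k)B$ by elementary scaling.

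For the first step I would invoke a Steinhaus/Piccard--Pettis type argument. In the measure case, pick a bounded subset $B_0\su B$ of positive measure and use that the convolution $\mathbf{1}_{B_0}*\mathbf{1}_{B_0}$ is a continuous nonnegative function with positive integral, hence strictly positive on a nonempty open set contained in $B_0+B_0\su B+B$. In the category case, the Borel set $B$ has the Baire property and, being non-meager, contains $U\sm M$ for some nonempty open set $U$ and meager $M$; then for any $u_0\in U$ and every $y$ close enough to $2u_0$, the open set $U\cap(y-U)$ is a neighborhood of $u_0$ and hence not contained in the meager set $M\cup(y-M)$, which forces $y\in B+B$. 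In either case $B+B\supset(p,q)$ for some $p<q$.

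For the second step of (i), since $(2k)B$ is the $k$-fold sum of $B+B$, it contains $(kp,kq)$. I would then split by the signs of $p,q$: if $p<0<q$ the intervals $(kp,kq)$ are nested with union $\R$; if $0\le p<q$, then for $k>p/(q-p)$ one has $(k+1)p<kq$, so consecutive intervals overlap and their union from some $k_0$ on equals $(k_0 p,\infty)$; the case $p<q\le 0$ is symmetric and yields a halfline to $-\infty$. Either way $\bigcup_k(2k)B$ contains a halfline.

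For part (ii): if $B+B\su B$, then a trivial induction gives $(n)B\su B$ for every $n\ge 1$, so $\bigcup_k(2k)B\su B$ and (i) applies directly. If only $B+B+B\su B$, I would first show by induction that $(2k+1)B\su B$ for all $k\ge 0$, using $(2k+1)B=(3)B+(2k-2)B\su B+(2k-2)B=(2k-1)B$ when $k\ge 2$. Then for any fixed $b_0\in B$ we have $(2k)B+b_0\su(2k+1)B\su B$, so $B\supset\left(\bigcup_k(2k)B\right)+b_0$, a translate of a halfline and hence itself a halfline. The main obstacle is really the Steinhaus/Piccard--Pettis step producing the interval in $B+B$; once that is in hand, the rest is an induction on $(n)B$ together with the elementary geometry of the intervals $(kp,kq)$.
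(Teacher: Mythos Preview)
Your argument is correct and follows essentially the same route as the paper: invoke Steinhaus/Piccard to get an interval in $B+B$, scale to $(kp,kq)$ and use the eventual overlap to produce a halfline, then in (ii) use $(2k)B\su B$ (resp.\ $(2k+1)B\su B$) and translate by an element of $B$. The only cosmetic differences are that the paper simply cites the Steinhaus and Piccard theorems rather than sketching them, and in the $B+B+B\su B$ case it writes $\bigcup_k (2k+1)B = B + \bigcup_k (2k)B$ instead of translating by a fixed $b_0$, which amounts to the same thing.
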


\begin{proof}
(i)
If $B$ has positive Lebesgue measure
then Steinhaus theorem \cite{St}, if $B$ is of second category then
Piccard theorem \cite{Pi} (see also in \cite{Ke}) 
implies that $B+B$ contains an interval $(a,b)$, so 
$(2k)B \supset (ka,kb)$ for any $k\in\N^+$.
Since for large enough $k$ the consecutive intervals $(ka,kb)$ overlap, 
and so $\cup_{k=1}^\infty(ka,kb)$ contains a halfline, 
this completes the proof of (i).

(ii)
If $B+B\su B$ then $B$ contains $\bigcup_{k=1}^\infty (2k)B$
so we are done by (i).

Now suppose that $B+B+B\su B$. Then, by induction we get that $(2k+1)B\su B$
for any $k\in\N^+$. Thus we have $\bigcup_{k=1}^\infty (2k+1)B\su B$.
Since $\bigcup_{k=1}^\infty (2k+1)B = B + \bigcup_{k=1}^\infty (2k)B$ and 
$B$ is nonempty, (i) implies that 
$\bigcup_{k=1}^\infty (2k+1)B$ contains a halfline,
which completes the proof of (ii).
\end{proof}

\begin{lemma}\label{l:pospartition}
Suppose that 
we have a decomposition 
$(0,\infty)=\bigcup_{\al<\ka} B_{\al}$ such that every $B_\al$
is Borel and for each $\al<\ka$ we have 
$B_\al+B_\al\su B_\al$ or $B_\al+B_\al+B_\al \su B_\al$.
Then either every $B_\al$ is of Lebesgue measure zero and
is of first category or $B_{\al_1}=(0,\infty)$ for some $\al_1<\ka$.
\end{lemma}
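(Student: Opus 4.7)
The plan is to show that if some piece $B_{\alpha_0}$ has positive Lebesgue measure or is of second category, then in fact $B_{\alpha_0}=(0,\infty)$, which gives the dichotomy.

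First I would apply Lemma~\ref{l:evensum}(ii) to such an $B_{\alpha_0}$. Since $B_{\alpha_0}\subset(0,\infty)$ is Borel, is ``large'' in one of the two senses, and satisfies either $B_{\alpha_0}+B_{\alpha_0}\subset B_{\alpha_0}$ or $B_{\alpha_0}+B_{\alpha_0}+B_{\alpha_0}\subset B_{\alpha_0}$, the lemma produces a halfline $(M,\infty)\subset B_{\alpha_0}$ for some $M>0$.

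Next, for an arbitrary $x\in(0,\infty)$, let $\alpha<\kappa$ be the (unique) index with $x\in B_\alpha$. If $B_\alpha+B_\alpha\subset B_\alpha$, then a trivial induction gives $nx\in B_\alpha$ for every $n\in\N^+$. If only the weaker $B_\alpha+B_\alpha+B_\alpha\subset B_\alpha$ holds, then by induction on $k$ one still obtains $(2k+1)x\in B_\alpha$ for all $k\ge 0$. In either case $B_\alpha$ contains arbitrarily large positive multiples of $x$, so we may pick $n$ (odd, if necessary) with $nx>M$. Then $nx\in B_\alpha\cap(M,\infty)\subset B_\alpha\cap B_{\alpha_0}$, and since the decomposition is pairwise disjoint this forces $\alpha=\alpha_0$, hence $x\in B_{\alpha_0}$.

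Since $x\in(0,\infty)$ was arbitrary, $B_{\alpha_0}=(0,\infty)$, as required. There is no real obstacle here; the only subtlety worth being careful about is handling the ``$3$-semigroup'' case separately so that one genuinely stays inside $B_\alpha$ while blowing up $x$ to cross the threshold $M$, which is why one restricts to odd multiples in that case.
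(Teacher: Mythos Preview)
Your proof is correct and follows essentially the same approach as the paper's: both apply Lemma~\ref{l:evensum}(ii) to the ``large'' piece to get a halfline $(M,\infty)\subset B_{\alpha_0}$, and then use that any nonempty $B_\alpha\subset(0,\infty)$ satisfying $B_\alpha+B_\alpha\subset B_\alpha$ or $B_\alpha+B_\alpha+B_\alpha\subset B_\alpha$ is unbounded above. The only cosmetic difference is that the paper phrases the last step as ``the other pieces are bounded, hence empty,'' whereas you argue directly that each $x$ lies in $B_{\alpha_0}$; these are contrapositives of one another.
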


\begin{proof}
Suppose that there is an $\al_1<\ka$
such that $B_{\al_1}$ has positive Lebesgue measure or is of second
category. We will show that $B_{\al_1}=(0,\infty)$.

By Lemma~\ref{l:evensum}, 
$B_{\al_1}$ contains a halfline, so the other sets of the decomposition
are all bounded. But clearly no bounded nonempty set $B\su(0,\infty)$
can have the property $B+B\su B$ or $B+B+B\su B$, so we have 
$B_{\al_1}=(0,\infty)$.
\end{proof}

\begin{theorem}\label{t:addstructure}
Suppose that 
we have a decomposition 
$\R=\bigcup_{\al<\ka} A_{\al}$ such that each $A_{\al}$ is
a Borel set closed under addition.
Then every $A_\al$ is of Lebesgue measure zero and
is of first category, or is equal to
$\R$, $(-\infty,0)$, $(-\infty,0]$, $(0,\infty)$, or $[0,\infty)$.
\end{theorem}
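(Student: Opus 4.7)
The natural strategy is to reduce the problem to Lemma~\ref{l:pospartition} applied separately to the positive and negative half-lines. For each $\al<\ka$, set
\[
B_\al = A_\al\cap(0,\infty) \quad\text{and}\quad C_\al = A_\al\cap(-\infty,0).
\]
Since $A_\al+A_\al\su A_\al$ and since the sum of two positive (respectively two negative) numbers stays positive (negative), we have $B_\al+B_\al\su B_\al$ and $C_\al+C_\al\su C_\al$. Moreover $\{B_\al\}_{\al<\ka}$ partitions $(0,\infty)$ into Borel additive semigroups, and $\{-C_\al\}_{\al<\ka}$ does the same for $(0,\infty)$, so Lemma~\ref{l:pospartition} applies on both sides.

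Thus there are four cases. (1) Every $B_\al$ is null and meager and every $C_\al$ is null and meager; then $A_\al\su B_\al\cup C_\al\cup\{0\}$ is null and meager for every $\al$, and we are done. (2) There is an $\al_1$ with $B_{\al_1}=(0,\infty)$ while every $C_\al$ is null and meager; then $A_{\al_1}$ equals $(0,\infty)$ or $[0,\infty)$ depending on whether $0\in A_{\al_1}$, while for $\al\neq\al_1$ we have $B_\al=\emptyset$, hence $A_\al\su C_\al\cup\{0\}$ is null and meager. Case (3) is symmetric and produces $(-\infty,0)$ or $(-\infty,0]$.

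The only remaining case is (4): $B_{\al_1}=(0,\infty)$ and $C_{\al_2}=(-\infty,0)$ for some $\al_1,\al_2$. If $\al_1\neq\al_2$, disjointness forces $A_{\al_1}$ and $A_{\al_2}$ to be one of the pairs $\{(0,\infty),(-\infty,0]\}$, $\{[0,\infty),(-\infty,0)\}$, or $\{(0,\infty),(-\infty,0)\}$ with an extra class $A_{\al_3}=\{0\}$; in each subcase every piece is on the allowed list (noting $\{0\}$ is null and meager). If $\al_1=\al_2$, then $A_{\al_1}\supset\R\sm\{0\}$; the set $\R\sm\{0\}$ is not closed under addition (e.g.\ $1+(-1)=0$), so $A_{\al_1}=\R$ and the decomposition is trivial.

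The main obstacle is essentially only the bookkeeping in case~(4), where one must verify that the point $0$ can only be placed in ways that keep every piece on the listed shortlist; this is immediate from the requirement that each piece be closed under addition, combined with the observation that any piece disjoint from both $(0,\infty)$ and $(-\infty,0)$ is contained in $\{0\}$. Everything else follows mechanically from Lemma~\ref{l:pospartition}.
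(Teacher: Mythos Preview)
Your approach is the same as the paper's---restrict each $A_\al$ to $(0,\infty)$ and to $(-\infty,0)$ and invoke Lemma~\ref{l:pospartition} on both sides---but there is a genuine gap in your cases (2) and (3). In case (2) you assert that $A_{\al_1}$ equals $(0,\infty)$ or $[0,\infty)$ ``depending on whether $0\in A_{\al_1}$''. But at that point you only know $A_{\al_1}\cap(0,\infty)=(0,\infty)$ and that $C_{\al_1}=A_{\al_1}\cap(-\infty,0)$ is null and meager; you have \emph{not} shown $C_{\al_1}=\emptyset$. Nothing prevents $A_{\al_1}$ from looking like $(0,\infty)\cup N$ for some nonempty null meager $N\subset(-\infty,0)$, and such a set is on neither your short list nor the null--meager list.

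The missing step is exactly the one the paper isolates: if $A_{\al_1}\supset(0,\infty)$ and $a\in A_{\al_1}\cap(-\infty,0)$, then closure under addition gives $a+(0,\infty)=(a,\infty)\subset A_{\al_1}$, hence $(a,0)\subset C_{\al_1}$, so $C_{\al_1}$ has positive measure, contradicting the case-(2) hypothesis. Thus $C_{\al_1}=\emptyset$ and your conclusion follows. The symmetric argument repairs case (3). Once you insert this observation, your proof is correct and essentially coincides with the paper's; the paper simply organises the endgame as a single statement (``if $A_\al\supset(0,\infty)$ or $A_\al\supset(-\infty,0)$ then $A_\al$ is one of the five listed sets'') rather than a four-case split.
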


\begin{proof}
If $\R=\bigcup_{\al<\ka} A_{\al}$ is a decomposition such that each $A_{\al}$ is
closed under addition then
$(0,\infty)=\bigcup_{\al<\ka} A_{\al}\cap (0,\infty)$ is also a decomposition
and each $A_{\al}\cap (0,\infty)$ is closed under addition. 
Then by Lemma~\ref{l:pospartition}, 
$A_{\al}\cap (0,\infty)$ is either $(0,\infty)$ or a set of Lebesgue
measure zero and of first category.
By symmetry, we also have that $A_{\al}\cap (-\infty,0)$ is 
either $(-\infty,0)$ or a set of Lebesgue
measure zero and of first category. 

It remains to prove that if 
$A_{\al}$ contains $(0,\infty)$ or $(-\infty,0)$ then 
$A_\al$ equals $\R$, $(-\infty,0)$, $(-\infty,0]$, $(0,\infty)$ or
$[0,\infty)$. 
So suppose that $A_\al\supset (0,\infty)$ and $a\in A_\al\cap(-\infty,0)$.
Then, since $A_\al$ is closed under addition, we get $A_\al\supset(a,\infty)$,
so $A_\al\cap (-\infty,0)$ has positive measure, thus by the previous
paragraph $A_{\al}\supset (-\infty,0)$, therefore $A_{\al}=\R$.
Similarly, we can prove that if 
$A_\al\supset (-\infty,0)$ and $a\in A_\al\cap(0,\infty)$ then $A_{\al}=\R$,
which completes the proof.
\end{proof}

Recall that if $\iI$ is an ideal on a set $X$ then $\cov\iI = \min \{|\iH| : \iH \su \iI, \cup \iH = X \}$. Let $\iN$ and $\iM$ denote the ideal of Lebesgue nullsets and the ideal of sets of first category in $\R$, respectively.
 
\begin{cor}\label{c:addkappa}
Suppose that 
we have a decomposition 
$\R=\bigcup_{\al<\ka} A_{\al}$ such that each $A_{\al}$ is
a nonempty Borel set closed under addition.
Then $\ka\in\{1,2,3\}$ or $\ka\ge \max(\cov\iN,\cov\iM)$.
\end{cor}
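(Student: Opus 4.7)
The plan is to apply Theorem~\ref{t:addstructure} and then do a case analysis on which ``big'' pieces appear. By that theorem, each $A_\al$ is either \emph{small}, meaning Borel, Lebesgue null and of first category, or else \emph{big}, meaning equal to one of $\R$, $(-\infty,0)$, $(-\infty,0]$, $(0,\infty)$, or $[0,\infty)$. Since the pieces of the decomposition are pairwise disjoint, direct inspection of these five sets shows that the sub-collection of big pieces used must be one of the following: a single copy of $\R$; the empty collection; a single halfline; or one of the three disjoint pairs $\{(-\infty,0],(0,\infty)\}$, $\{(-\infty,0),[0,\infty)\}$, or $\{(-\infty,0),(0,\infty)\}$ (a disjoint triple is impossible, since one can pick at most one of $(-\infty,0),(-\infty,0]$ and at most one of $(0,\infty),[0,\infty)$).

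First I would dispatch the configurations yielding $\ka \in \{1,2,3\}$. If some piece equals $\R$, then $\ka=1$. Each of the first two disjoint pairs above already partitions $\R$, giving $\ka=2$. The pair $\{(-\infty,0),(0,\infty)\}$ leaves only the point $0$ uncovered; since any further piece is a nonempty subset of $\{0\}$, there is exactly one extra piece, namely $\{0\}$, and so $\ka=3$.

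In every remaining configuration (no big pieces, or exactly one big halfline), the set $S$ equal to $\R$ or a closed halfline is partitioned by the small pieces alone. To conclude $\ka\ge\cov\iN$ here, I would use the standard reflection trick: if $S\supseteq[0,\infty)=\bigcup_{\be<\la}N_\be$ with each $N_\be$ Borel null, then the sets $N_\be\cup(-N_\be)$ are Borel null and cover $\R$, so $\la\ge\cov\iN$; the analogous argument with meager sets gives $\la\ge\cov\iM$. Therefore $\ka\ge\max(\cov\iN,\cov\iM)$ in these cases. The only real obstacle is the bookkeeping in enumerating the disjoint sub-collections of the five big sets; once that is handled the rest is routine.
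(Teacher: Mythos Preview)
Your argument is correct and is essentially the paper's own proof with the case analysis made explicit: the paper simply takes the contrapositive, assuming $\ka<\max(\cov\iN,\cov\iM)$ so that the small pieces cannot cover either halfline, and then reads off from Theorem~\ref{t:addstructure} that every piece lies in the finite list $\{0\},\R,(-\infty,0),(-\infty,0],(0,\infty),[0,\infty)$, whence $\ka\le3$. One trivial slip to fix: when the lone big piece is a \emph{closed} halfline, the leftover set $S$ is an \emph{open} halfline, so your reflected sets $N_\be\cup(-N_\be)$ miss the origin; adjoin $\{0\}$ to one of them and the argument goes through unchanged.
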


\begin{proof}
If $\ka<\max(\cov\iN,\cov\iM)$ then at most $\ka$ sets of Lebesgue
measure zero and of first category cannot cover $(-\infty,0)$ or
$(0,\infty)$. Then
by Theorem~\ref{t:addstructure}, every $A_\al$ equals to 
$\{0\}$, $\R$, $(-\infty,0)$, $(-\infty,0]$, $(0,\infty)$, or $[0,\infty)$,
so $\ka\le 3$.
\end{proof}

\begin{theorem}\label{t:multstructure}
Suppose that 
we have a decomposition 
$\R=\bigcup_{\al<\ka} M_{\al}$ such that each $M_{\al}$ is
a Borel set closed under  \emph{multiplication}.
Then every $M_\al$ is of Lebesgue measure zero and
is of first category, or
is the union 
of some of the sets $(-1,0)\cup (0,1)$, 
$(-\infty,-1)\cup (1,\infty)$, $\{-1,1\}$ and  $\{0\}$.
%
\end{theorem}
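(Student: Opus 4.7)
The plan is to reduce the multiplicative problem on $\R$ to the additive one already handled by Theorem~\ref{t:addstructure}. I write each piece as $M_\alpha=M_\alpha^+\cup M_\alpha^-\cup(M_\alpha\cap\{0\})$ with $M_\alpha^\pm=M_\alpha\cap(\pm(0,\infty))$, study the positive and negative halves separately through the logarithm, and then match them using the closure relations $M_\alpha^-\cdot M_\alpha^-\subset M_\alpha^+$ and $M_\alpha^+\cdot M_\alpha^-\subset M_\alpha^-$ forced by multiplicativity, together with the global constraint that $-1$ and $1$ must belong to the same piece.

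The $M_\alpha^+$ form a Borel partition of $(0,\infty)$ into multiplicative semigroups, and the homeomorphism $\log\colon(0,\infty)\to\R$ preserves Borelness, Lebesgue null sets, and first category while sending multiplication to addition; so Theorem~\ref{t:addstructure} applied to the resulting additive partition of $\R$ forces each $M_\alpha^+$ to be Lebesgue null and of first category, or one of $(0,1)$, $(0,1]$, $(1,\infty)$, $[1,\infty)$, $(0,\infty)$. For the negative halves I set $N_\alpha=-M_\alpha^-\subset(0,\infty)$; since $(-x)(-y)(-z)=-xyz$, each $N_\alpha$ satisfies $N_\alpha\cdot N_\alpha\cdot N_\alpha\subset N_\alpha$, so $\{N_\alpha\}$ is a Borel partition of $(0,\infty)$ into sets closed under triple multiplication. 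Pushing to $\R$ via $\log$ and applying Lemma~\ref{l:pospartition} (which covers both the $B+B\subset B$ and the $B+B+B\subset B$ cases) to each of the positive and negative halves of $\R$ separately, followed by the same combination argument as in the proof of Theorem~\ref{t:addstructure}, shows that each $M_\alpha^-$ is either Lebesgue null and of first category, or one of $(-1,0)$, $[-1,0)$, $(-\infty,-1)$, $(-\infty,-1]$, $(-\infty,0)$.

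Finally I synchronize the two halves. Via $\log$, Steinhaus's theorem gives that if $N_\alpha$ has positive measure then $N_\alpha\cdot N_\alpha$ contains an open subinterval of $(0,\infty)$, and since $N_\alpha\cdot N_\alpha\subset M_\alpha^+$, so does $M_\alpha^+$; Piccard's theorem yields the analogous statement with ``second category'' in place of ``positive measure''. Contrapositively, $M_\alpha^+$ being null and of first category forces $M_\alpha^-$ to be null and of first category, and hence $M_\alpha$ itself is null and of first category. Otherwise $M_\alpha^+$ is one of the five nontrivial sets, and a short case check based on the compatibility inclusions $M_\alpha^-\cdot M_\alpha^-\subset M_\alpha^+$ and $M_\alpha^+\cdot M_\alpha^-\subset M_\alpha^-$ together with the global $\pm 1$ constraint uniquely pins down $M_\alpha^-$, so that $M_\alpha^+\cup M_\alpha^-$ becomes a union of some of $(-1,0)\cup(0,1)$, $(-\infty,-1)\cup(1,\infty)$, and $\{-1,1\}$; including or omitting $0$ then gives $M_\alpha$ as a union of some of the four listed sets. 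The most delicate step of this case analysis, and the main obstacle, is ruling out asymmetric configurations such as $M_\alpha^+=(0,1)$ with $M_\alpha^-=\emptyset$: any $x\in(-1,0)$ lying in a different piece $M_\beta$ would force $x^2\in(0,1)\subset M_\alpha$ to lie in $M_\beta$ too, contradicting disjointness and hence forcing $(-1,0)\subset M_\alpha$.
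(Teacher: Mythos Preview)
Your proof is correct and rests on the same core idea as the paper's: push the multiplicative problem through $\log$ to the additive setting and invoke Lemma~\ref{l:pospartition} (and, in your case, Theorem~\ref{t:addstructure}) together with the Steinhaus/Piccard phenomenon. The only difference is organizational. The paper works on the negative axis, splitting it as $(-\infty,-1)$ and $(-1,0)$ and using the triple-product closure $(-)(-)(-)\subset(-)$ on each piece; the positive half is then obtained for free since $M_\alpha\supset(-\infty,-1)$ forces $M_\alpha\supset(1,\infty)$ by squaring, and similarly for $(-1,0)$ and $(0,1)$. You instead treat $M_\alpha^+$ directly via Theorem~\ref{t:addstructure}, treat $M_\alpha^-$ via triple closure (which, as you note, still lets the combination argument of Theorem~\ref{t:addstructure} go through, since $a+(0,\infty)+(0,\infty)=(a,\infty)$), and then synchronize using $N_\alpha\cdot N_\alpha\subset M_\alpha^+$ and the $x\mapsto x^2$ trick. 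Your route is a bit more explicit about the positive halves and the exclusion of asymmetric configurations like $M_\alpha^+=(0,1)$ with $M_\alpha^-=\emptyset$, while the paper's route is shorter because it never has to classify $M_\alpha^+$ separately. Either way the substance is the same.
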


\begin{proof}
Let $B_{\al}=M_\al\cap (-\infty,-1)$. Then $B_\al$ is not closed
under multiplication but we still have $B_\al \cdot B_\al \cdot B_\al \su B_\al$
since $M_\al \cdot M_\al \cdot M_\al \su M_\al$ and 
$(-\infty,-1) \cdot (-\infty,-1) \cdot (-\infty,-1) \su (-\infty,-1)$.
Let $C_{\al}=\log(-B_\al)$. Then $\bigcup_{\al<\ka} C_{\al}$ 
is a Borel decomposition of 
$(0,\infty)$ and we have $C_\al+C_\al+C_\al\su C_\al$. 
Then by Lemma~\ref{l:pospartition}, either every $C_\al$ is of Lebesgue
measure zero and is of first category or
there exists an $\al_1<\ka$ for which $C_{\al_1}=(0,\infty)$.
In the first case every $B_{\al}=M_\al\cap (-\infty,-1)$ is also 
of Lebesgue
measure zero and is of first category.
In the latter case, 
for this $\al_1$, we have $(-\infty,-1)=B_{\al_1}=M_{\al_1}\cap (-\infty,-1)$.
Since $M_{\al_1}$ is closed under multiplication, it also contains 
$(1,\infty)$, so we have $M_{\al_1}\supset (-\infty,-1)\cup (1,\infty)$.

Repeating the above argument for $B'_\al=M_\al\cap (-1,0)$ and 
$C'_\al= -\log(-B'_\al)$ we get that either
every $B'_\al=M_\al\cap (-1,0)$ is of Lebesgue measure zero and of first
category or there exists an $\al_2<\ka$ such that 
$M_{\al_2}\supset (-1,0)\cup (0,1)$.

Since the set $M_\al$ that contains $-1$ also contains $1$,
the only fact that remains to prove is that if $M_\al$ contains one of
$(-\infty,-1)\cup (1,\infty)$ and $(-1,0)\cup (0,1)$ then either it
contains the other one or it is disjoint from the other one.  
So suppose that $M_\al\supset (-\infty,-1)\cup (1,\infty)$ and 
$a\in M_\al\cap((-1,0)\cup (0,1))$.
Then, since $M_\al$ is closed under multiplication, we get 
$M_\al\supset(-\infty,-a)\cup (a,\infty)$,
so $M_\al \cap((-1,0)\cup (0,1))$ has positive measure, thus by 
the above 
paragraph $M_{\al}\supset (-1,0)\cup (0,1)$.
Similarly, we can prove that if 
$M_\al\supset (-1,0)\cup (0,1)$ and 
$a\in M_\al\cap (-\infty,-1)\cup (1,\infty))$ 
then $M_{\al}\supset(-\infty,-1)\cup (1,\infty)$,
which completes the proof.
\end{proof}

\begin{remark}
Similarly to Theorem~\ref{t:addstructure}, we could also explicitly describe 
in Theorem~\ref{t:multstructure} the possible options for
$M_\al$, in case it is not a Lebesgue measure zero set of first category.
We cannot get all the $2^4=16$ possible unions of  the sets $(-1,0)\cup (0,1)$, 
$(-\infty,-1)\cup (1,\infty)$, $\{-1,1\}$ and  $\{0\}$, since some of these
unions are not closed under multiplication. We do not have to take those ones
that are of measure zero and first category. But it is easy to see that $M_\al$ can be any of the
remaining 10, namely $(-1,0)\cup (0,1)$, $[-1,0)\cup (0,1]$,
$(-1,1)$, $[-1,1]$,
$(-\infty,-1)\cup (1,\infty)$, $(-\infty,-1]\cup [1,\infty)$,
$(-\infty,-1)\cup \{0\} \cup (1,\infty)$, 
$(-\infty,-1]\cup \{0\} \cup [1,\infty)$,
$\R$ and $\R\sm\{0\}$.
\end{remark}

\begin{cor}\label{c:multkappa}
Suppose that 
we have a decomposition 
$\R=\bigcup_{\al<\ka} M_{\al}$ such that each $M_{\al}$ is
a nonempty Borel set closed under multiplication.
Then $\ka\in\{1,2,3,4\}$ or $\ka\ge \max(\cov\iN,\cov\iM)$.
\end{cor}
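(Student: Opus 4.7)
The plan is to follow the strategy of Corollary~\ref{c:addkappa}, replacing Theorem~\ref{t:addstructure} by its multiplicative counterpart Theorem~\ref{t:multstructure}. Suppose $\ka<\max(\cov\iN,\cov\iM)$ and write $S_1=(-1,0)\cup(0,1)$, $S_2=(-\infty,-1)\cup(1,\infty)$, $S_3=\{-1,1\}$, $S_4=\{0\}$ for the four building blocks appearing in that theorem.

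First I would establish the standard fact that fewer than $\cov\iN$ null sets (respectively fewer than $\cov\iM$ meager sets) cannot cover any non-degenerate interval, since $\R$ is the union of countably many translates of a bounded interval. Because $S_1$ and $S_2$ each contain intervals, the null-and-meager pieces of the decomposition cannot cover $S_1$ or $S_2$. Hence some $M_\al$ meets $S_1$ in a non-null set; by Theorem~\ref{t:multstructure} that piece is itself a union of some of the $S_i$, so it must contain all of $S_1$. Call it $M_1$, and let $M_2$ be the analogously obtained piece containing $S_2$; $M_1$ and $M_2$ may or may not coincide.

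Next I would observe that $S_3=\{-1,1\}$ cannot be split between two pieces, since any piece containing $-1$ must also contain $(-1)^2=1$; and $S_4=\{0\}$ is a singleton, hence also indivisible. Therefore, after $M_1$ and $M_2$ have been listed, the only further pieces can be $\{-1,1\}$ (when this set is not contained in $M_1\cup M_2$) and $\{0\}$ (when this point is not contained in $M_1\cup M_2$). A short case split then finishes the argument: if $M_1=M_2$ there are at most $1+2=3$ pieces, whereas if $M_1\neq M_2$ there are at most $2+2=4$. In either case $\ka\le 4$.

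I do not anticipate a real obstacle here; this is a direct adaptation of the proof of Corollary~\ref{c:addkappa}, the only genuinely new bookkeeping being the distribution of the two atomic blocks $S_3$ and $S_4$ among the larger pieces. The only step worth stating explicitly is the elementary covering fact about intervals versus $\cov\iN$ and $\cov\iM$ used at the very beginning.
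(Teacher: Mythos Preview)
Your proposal is correct and follows the same route as the paper's proof: assume $\ka<\max(\cov\iN,\cov\iM)$, use the covering fact to rule out the null--meager alternative in Theorem~\ref{t:multstructure}, and then count the possible unions of the four blocks. The paper's argument is terser---it simply asserts that every $M_\al$ is then a union of the four blocks and concludes $\ka\le4$---whereas you spell out why $S_3$ and $S_4$ cannot be split and how the remaining pieces fit into $\{-1,0,1\}$; this is precisely the bookkeeping the paper leaves implicit.
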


\begin{proof}
If $\ka<\max(\cov\iN,\cov\iM)$ then at most $\ka$ sets of Lebesgue
measure zero and of first category cannot cover any interval. Then
by Theorem~\ref{t:multstructure} every $M_\al$ 
is the union 
of some of the sets $(-1,0)\cup (0,1)$, 
$(-\infty,-1)\cup (1,\infty)$, $\{-1,1\}$ and  $\{0\}$, so $\ka\le 4$. 
\end{proof}

\begin{cor}\label{c:combinedstructure}
Suppose that 
we have a decomposition 
$\R=\bigcup_{\al<\ka} B_{\al}$ such that each $B_{\al}$ is
a nonempty Borel set closed under \emph{both} addition and multiplication.

(i) Then either every $B_\al$ is of Lebesgue measure zero and
is of first category or $B_{\al_1}=\R$ for some $\al_1<\ka$.

(ii) We have $\ka=1$ or $\ka\ge \max(\cov\iN,\cov\iM)$

\end{cor}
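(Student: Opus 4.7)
The plan is to combine Theorem~\ref{t:addstructure} with the extra multiplicative closure assumption, after which part (ii) will follow almost immediately. Since each $B_\al$ is in particular a Borel additive semigroup, Theorem~\ref{t:addstructure} tells us that every $B_\al$ is either a Lebesgue null set of first category or one of the five sets $\R$, $(-\infty,0)$, $(-\infty,0]$, $(0,\infty)$, $[0,\infty)$. My first step would be to rule out four of these five ``big'' options using multiplicative closure and the disjointness of the pieces.

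The sets $(-\infty,0)$ and $(-\infty,0]$ are trivially not closed under multiplication since $(-1)\cdot(-1)=1$, so they are immediately excluded. The remaining two sets $(0,\infty)$ and $[0,\infty)$ are multiplicatively closed, so a further argument is needed. The key observation is that if $B_{\al_0}$ equals $(0,\infty)$ or $[0,\infty)$, then every negative $x$ lies in some piece $B_\al$ with $\al\ne\al_0$; closure of $B_\al$ under multiplication forces $x^2\in B_\al$, but also $x^2\in(0,\infty)\su B_{\al_0}$, contradicting the disjointness of the decomposition. Hence the only ``big'' option that can actually appear is $B_\al=\R$, and in that case the partition consists of this single piece, so $\ka=1$. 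This proves (i): either some piece is $\R$ (and $\ka=1$), or all pieces are Lebesgue null and of first category.

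For (ii), suppose $\ka\ne 1$. Then by (i) every $B_\al$ is both a Lebesgue null set and a set of first category, while $\bigcup_{\al<\ka}B_\al=\R$. Since $\R$ is neither a Lebesgue null set nor of first category, covering it requires at least $\cov\iN$ many null sets and at least $\cov\iM$ many first category sets, so $\ka\ge\max(\cov\iN,\cov\iM)$.

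I do not expect any real obstacle here; the whole corollary is a short combination of Theorem~\ref{t:addstructure} with a one-line use of the identity $x\cdot x=x^2$ to eliminate the half-line options.
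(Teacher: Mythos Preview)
Your argument is correct. The paper's own proof is extremely terse: it simply says that (i) follows directly from Theorems~\ref{t:addstructure} and~\ref{t:multstructure}, and that (ii) follows from (i). In other words, the paper intersects the list of ``big'' options coming from the additive classification (the five sets $\R$, $(-\infty,0)$, $(-\infty,0]$, $(0,\infty)$, $[0,\infty)$) with the list coming from the multiplicative classification (unions of the four symmetric pieces in Theorem~\ref{t:multstructure}), and observes that $\R$ is the only set occurring in both.

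Your route is slightly different and a bit more elementary: you invoke only Theorem~\ref{t:addstructure} and then eliminate the four half-line options by hand, using that $(-1)(-1)=1$ rules out $(-\infty,0)$ and $(-\infty,0]$, and that the square of any negative element must land in $(0,\infty)$ to rule out $(0,\infty)$ and $[0,\infty)$ via disjointness. This avoids the full multiplicative structure theorem entirely. Both approaches are short; yours is more self-contained, while the paper's is a one-line appeal to the two structure theorems already established.
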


\begin{proof}
Claim (i) follows directly from Theorems~\ref{t:addstructure} 
and \ref{t:multstructure}. Claim (ii) follows from (i).
\end{proof}

\section{The main result: Existence of a certain Borel decomposition}
\label{s:main}


\begin{theorem}\label{t:main}
It is 
consistent that $\cont=\om_2$ and 
the real line can be partitioned into $\om_1$ additive $F_{\si}$ semigroups;
that is, there exists a decomposition 
$\R=\bigcup_{\al<\om_1} A_{\al}$ such that each $A_{\al}$ is a
nonempty $F_{\si}$ set
closed under addition. 
\end{theorem}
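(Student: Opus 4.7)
My plan is to work in a model where $\cont=\om_2$ but $\cov\iN=\cov\iM=\om_1$ (as forced by Corollary~\ref{c:addkappa}), and to build the decomposition via an invariant tied to an $\om_1$-scale in $\N^\N$. First I would reduce the problem: by Theorem~\ref{t:addstructure} the only semigroup pieces escaping the null/meager dichotomy are the trivial halflines and $\R$ itself, so the real task is to produce $\om_1$ nonempty $F_\si$ additive semigroups partitioning $(0,\infty)$. Symmetry under $x\mapsto -x$ handles $(-\infty,0)$ and $\{0\}$ forms a singleton piece, giving $\om_1+\om_1+1=\om_1$ pieces in total.

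For the set-theoretic framework, in addition to $\cont=\om_2$ and $\cov\iN=\cov\iM=\om_1$, I would arrange $\mathfrak{b}=\mathfrak{d}=\om_1$, so that a $<^*$-increasing, cofinal family $\{f_\al:\al<\om_1\}\su\N^\N$ exists. Such a configuration arises, for example, by iterating a proper, $\om^\om$-bounding forcing of length $\om_2$ over a model of CH.

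The core construction would attach to each $x\in(0,\infty)$ a Borel invariant $g_x\in\N^\N$ --- built, say, from a natural expansion of $x$ such as the factorial expansion, an adapted binary expansion, or the sequence $n\mapsto\lfloor nx\rfloor$ --- chosen so that $g_{x+y}$ is asymptotically controlled by a fixed monotone combination of $g_x$ and $g_y$. The pieces would then be defined level-by-level by $A_\al=\{x:g_x\le^* f_\al\}\sm\bigcup_{\be<\al}A_\be$; cofinality of the scale and the fact that each $g_x$ is eventually dominated by some $f_\al$ guarantee that the $A_\al$ exhaust $(0,\infty)$. The $F_\si$ character is inherited from writing $\{x:g_x\le^* f_\al\}=\bigcup_{N\in\N}\{x:g_x(n)\le f_\al(n)\ \text{for all}\ n\ge N\}$, each piece in the countable union being closed provided $x\mapsto g_x$ is suitably continuous.

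The main obstacle will be producing an invariant $g$ that simultaneously (i) is sub-additive in the correct sense, so that the levels genuinely form semigroups, (ii) sweeps out the cofinal family $\{f_\al\}$, so that the decomposition covers $(0,\infty)$ with nonempty pieces, and (iii) depends on $x$ in a sufficiently continuous fashion to yield $F_\si$ level sets. The first two requirements pull in opposite directions: near-linear invariants such as $g_x(n)=\lfloor nx\rfloor$ are sub-additive but not ``surjective enough'' onto the scale, whereas invariants encoding fine digit patterns are rich but misbehave under carrying. If no direct invariant works, the fallback is an $\om_2$-length iterated forcing construction in which, with appropriate bookkeeping, each stage places previously unassigned reals into one of $\om_1$ growing $F_\si$ semigroups, and standard preservation theorems keep both $\cont=\om_2$ and the $F_\si$ structure intact in the final extension.
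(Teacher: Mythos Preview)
Your proposal is a plan whose central step is explicitly left open, and the surrounding scaffolding already fails. First, $A_\al=\{x:g_x\le^* f_\al\}\sm\bigcup_{\be<\al}A_\be$ is a difference of two $F_\si$ sets (the subtracted union is countable since $\al<\om_1$), and such differences are not $F_\si$ in general (witness $\R\sm\Q$); so you have not produced $F_\si$ pieces. Second, an asymptotic upper bound on $g_{x+y}$ in terms of $g_x,g_y$ gives no lower bound, so nothing prevents $x+y$ from dropping into an earlier $A_\be$; conversely a lower bound (as with $g_x(n)=\lfloor nx\rfloor$, where $g_{x+y}\ge g_x+g_y$) kills the upper bound, since $g_x,g_y\le^* f_\al$ only yields $g_{x+y}\le^* 2f_\al+1$. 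That particular example is degenerate anyway: every $g_x$ has linear growth, so $\{x:g_x\le^* f\}$ is always an interval, and the resulting $A_\al$'s are bounded half-open intervals in $(0,\infty)$, which are never additive semigroups. You yourself concede that requirements (i)--(iii) conflict and offer no candidate $g$; the fallback forcing paragraph restates the goal without saying what is forced or why the generic object should be closed under addition.

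The paper's proof is entirely different and does not use scales or dominating families. It invokes a theorem of Ciesielski--Pawlikowski: consistently $\cont=\om_2$ and there is a Hamel basis $H=\bigcup_{\al<\om_1}C_\al$ with the $C_\al$ pairwise disjoint Cantor sets. In each $C_\al$ fix a countable base $\{B_\al^k\}_k$ with each $B_\al^k$ and $C_\al\sm B_\al^k$ compact. For nonzero $x$ with Hamel support $B(x)$, set $\al=\max\{\be:B(x)\cap C_\be\neq\emptyset\}$, let $k$ be least with the coefficient sum $S(x,B_\al^k)\neq 0$, and put $x$ in the piece indexed by $(\al,k)$ together with the sign of $S(x,B_\al^k)$. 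Additivity of $S(\cdot,J)$ and stability of the sign under adding same-sign reals make each piece a semigroup; each piece is a countable union of Minkowski sums of rational multiples of the compact sets $C_\be$, $B_\al^k$, $C_\al\sm B_\al^k$, hence $\si$-compact. The algebraic structure of the Hamel basis, not an $\N^\N$-valued invariant ordered by $\le^*$, is what makes the additive and the descriptive requirements compatible.
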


\begin{proof}
Ciesielski and Pawlikowski \cite{CP} proved that
consistently $\cont=\om_2$ and there exist a Hamel basis 
that is the union of $\om_1$ pairwise disjoint Cantor sets.
Fix such a Hamel basis $H$ with such a decomposition 
$H=\bigcup_{\al<\om_1} C_{\al}$, and in each Cantor set $C_{\al}$ fix 
a countable base 
$\{B_{\al}^k\}_{k \in \N^+}$ such that for each $k$ and $\al$ both
$B_{\al}^k$ and $C_\al\sm B_{\al}^k$ are compact.


Every $x\in\R$ has a unique representation of the form 
$x=r_1 b_1 + \ldots +r_n b_n$, where $r_1,\ldots,r_n\in\Q\sm\{0\}$ and 
$b_1,\ldots,b_n\in H$. For notational simplicity, let us define 
$B(x)=\{b_1,\ldots,b_j\}$, and for any $J\su H$
we say that $r_i$ is a \emph{$J$-coefficient of $x$} if $b_i\in J$.
For $x\in\R$ let 
$$
I(x)=\{\ \al<\om_1\ :\ x \textrm{ has a } C_{\al} \textrm{-coefficient }\} 
= \{\ \al<\om_1\ :\ B(x)\cap C_{\al}\neq \emptyset\ \}.
$$ 
For $x\in\R$ and $J\su H$ also let $S(x,J)$ be the sum of the $J$-coefficients of $x$.
Note that $S(x+y,J) = S(x,J) + S(y,J)$ for every $J\su H$ and $x, y\in\R$.

Finally, let 
\begin{equation}\nonumber
\begin{split}
A^+(\al,k) & =\{x \in \R\sm\{0\} : \max(I(x))=\al,\ S(x,B_{\al}^k)>0,\ 
                                      (\forall i<k)\ S(x,B_{\al}^i)=0   \},\\ 
A^-(\al,k) & =\{x \in \R\sm\{0\} : \max(I(x))=\al,\ S(x,B_{\al}^k)<0,\ 
                                      (\forall i<k)\ S(x,B_{\al}^i)=0   \}. 
\end{split}
\end{equation}

In other words, $A^+(\al,k)$ contains $x\in\R\sm\{0\}$ if and only if 
$\al$ is the largest ordinal so that $x$ has a $C_{\al}$-coefficient
and 
$k$ is the smallest integer so that the sum
of the $B_{\al}^k$-coefficients of $x$ is nonzero and this sum is
positive, and similarly for $A^-(\al,k)$.

We claim that $\{0\}$ and the 
sets $A^+(\al,k)$ and $A^-(\al,k)$ for $\al<\om_1$ and 
$k\in\N^+$ form a decomposition with all the required properties.

It is clear from the definitions that these sets are pairwise disjoint. 
To show that their union is $\R$ it is enough to check that if 
$x$ has a $C_{\al}$-coefficient then $S(x,B_{\al}^i)$ is nonzero for
some $i\in\N^+$. So suppose that $x$ has a $C_{\al}$-coefficient, that is, 
$B(x)\cap C_{\al}$ is a nonempty finite subset of $C_{\al}$.
Since $\{ B_{\al}^i \}_{i \in \N^+}$ is a base of $C_{\al}$,
there exists an $i$ such that $B(x)\cap B_{\al}^i$ 
is a singleton, hence $S(x,B_{\al}^i)$, as the sum of a single nonzero term,
cannot be zero.

Now we show that the 
sets $A^+(\al,k)$ and $A^-(\al,k)$ are closed under addition.
By symmetry, it is enough to prove this for $A^+(\al,k)$.
If $x,y\in A^+(\al,k)$ then clearly 
$S(x+y,B_{\al}^k)=S(x,B_{\al}^k)+S(y,B_{\al}^k)>0$
and $S(x+y,B_{\al}^i)=S(x,B_{\al}^i)+S(y,B_{\al}^i)=0$ for every $i<k$.
Since $S(x+y,B_{\al}^k)>0$, $x+y$ has a $C_\al$-coefficient. On
the other hand, $x$ and $y$ have no $C_\be$-coefficients for $\be>\al$,
so $x+y$ cannot have $C_\be$-coefficients either, thus 
$\max(I(x+y))=\al$. Therefore $x+y\in A^+(\al,k)$, indeed.

So it remains to prove that the sets  $A^+(\al,k)$ and $A^-(\al,k)$ are
$F_{\sigma}$, and again it is enough to prove this for $A^+(\al,k)$.
Fix $\al<\om_1$ and $k\in\N^+$. Note that $A^+(\al,k)=D_\al+E^+(\al,k)$, where
$$
D_{\al}=\big\{x\in\R\sm\{0\}\ :\ B(x)\su \cup_{\be<\al} C_\be\big\},
$$
$$
E^+(\al,k)=\big\{x\in\R\sm\{0\}\ :\ B(x)\su C_\al,\ S(x,B_{\al}^k)>0,\ 
                                      (\forall i<k)\ S(x,B_{\al}^i)=0 \big\},
$$
and we use the notation $A+B=\{a+b:a\in A, b\in B\}$. The sum of two 
compact sets is compact, hence the sum of two $\si$-compact sets is 
$\si$-compact, thus it is enough to check that both $D_\al$ and 
$E^+(\al,k)$ are $\sigma$-compact.

The set $D_\al$ is $\si$-compact since
$$
D_{\al}=\bigcup\ \{\ r_1 C_{\be_1}+\ldots+{r_m C_{\be_m}} \ :\ m\in\N^+, 
(\forall i=1,\ldots m)\ r_i\in\Q,\ \be_i<\al\ \}.
$$

In order to show that $E^+(\al,k)$ is $\sigma$-compact, we rewrite it as
$$
E^+(\al,k)=F^+(\al,k)\cap F^0(\al,1) \cap F^0(\al,2) \cap \ldots 
\cap F^0(\al,k-1),
$$
where for any $n\in\N^+$,
\begin{equation}\nonumber
\begin{split}
F^0(\al,n) &=\{\ x \in \R\sm\{0\} \ :\ B(x)\su C_{\al},\ S(x,B_{\al}^n)=0\  \},\\ 
F^+(\al,n) &=\{\ x \in \R\sm\{0\} \ :\ B(x)\su C_{\al},\ S(x,B_{\al}^n)>0\  \}.
\end{split}
\end{equation}
Thus, to complete to proof, it is enough to show that the sets $F^0(\al,n)$
and $F^+(\al,n)$ are $\si$-compact.
Note that these sets can be also written 
as
\begin{equation}\nonumber
\begin{split}
F^0(\al,n) = \bigcup\ \Big\{\ & r_1 B_{\al}^n+\ldots+{r_m B_{\al}^n} 
+ r_{m+1}(C_{\al}\sm B_{\al}^n)+\ldots+r_{m+l}(C_{\al}\sm B_{\al}^n) \ :\\
 & m,l\in\N,\ r_1,\ldots,r_{m+l}\in\Q,\ r_1+\ldots+r_m=0\ \Big\},\\ 
F^+(\al,n) = \bigcup\ \Big\{\ & r_1 B_{\al}^n+\ldots+{r_m B_{\al}^n} 
+ r_{m+1}(C_{\al}\sm B_{\al}^n)+\ldots+r_{m+l}(C_{\al}\sm B_{\al}^n) \ :\\
 & m,l\in\N,\ r_1,\ldots,r_{m+l}\in\Q,\ r_1+\ldots+r_m>0\ \Big\}. 
\end{split}
\end{equation}
Since the sets  $B_{\al}^n$ and $C_{\al}\sm B_{\al}^n$ were chosen to be
compact, these sets are $\si$-compact indeed, which completes the proof.
\end{proof}

The following simple observations show that Theorem~\ref{t:main} is sharp in 
the sense that $F_\si$ cannot be replaced by $G_\de$.

\begin{prop}
Suppose that for some cardinal $\ka$ we have a decomposition 
$\R=\bigcup_{\al<\ka} A_{\al}$ such that each $A_{\al}$ is
closed under addition and nonempty. 

(i) Then each $A_{\al}$ is the union of sets of the form 
$c\cdot \Q^+$ ($c\in\R$).

(ii) If each $A_{\al}$ is a nonempty $G_{\de}$ set then each $A_\al$ is the union 
of some of the sets $(-\infty,0), \{0\}, (0,\infty)$, consequently $\ka\le 3$. 
\end{prop}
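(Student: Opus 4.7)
For part (i), the plan is to show that each $A_\al$ is closed under multiplication by positive rationals; then the decomposition $A_\al = \bigcup_{x \in A_\al} x\cdot\Q^+$ yields the claim immediately. Fix $x \in A_\al$ and positive integers $m, n$, and set $y = (m/n)x$. Let $\be$ be the unique index with $y \in A_\be$. Additive closure of $A_\be$ gives $n y = m x \in A_\be$, while additive closure of $A_\al$ gives $m x \in A_\al$. Pairwise disjointness of the decomposition forces $\al = \be$, so $(m/n) x \in A_\al$, as required.

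For part (ii), the plan is to combine (i) with a Baire category argument carried out in the Baire space $(0,\infty)$ (and its mirror image $(-\infty,0)$). Suppose toward a contradiction that two distinct pieces $A_\al$ and $A_\be$ both meet $(0,\infty)$. Choose $x \in A_\al \cap (0,\infty)$ and $y \in A_\be \cap (0,\infty)$. By (i), $x\cdot\Q^+ \su A_\al \cap (0,\infty)$ and $y\cdot\Q^+ \su A_\be \cap (0,\infty)$, and each of these contained sets is dense in $(0,\infty)$. Since $A_\al$ and $A_\be$ are $G_\de$ in $\R$, their intersections with the open set $(0,\infty)$ are $G_\de$ in $(0,\infty)$. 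Hence $A_\al \cap (0,\infty)$ and $A_\be \cap (0,\infty)$ would be two disjoint dense $G_\de$ subsets of the Baire space $(0,\infty)$, contradicting the Baire category theorem. Consequently at most one piece meets $(0,\infty)$, and that piece must then contain all of $(0,\infty)$; by symmetry exactly one piece contains $(-\infty,0)$. Since $0$ lies in some piece, every $A_\al$ is a union of some of the pairwise disjoint sets $(-\infty,0)$, $\{0\}$, $(0,\infty)$, forcing $\ka \le 3$.

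I do not anticipate a real obstacle. The only points to verify are the two standing hypotheses for the Baire argument: that $(0,\infty)$ is a Baire space (immediate from its complete metrizability) and that the $G_\de$ hypothesis in $\R$ transfers to the subspace $(0,\infty)$ (immediate because $(0,\infty)$ is open in $\R$). Everything else reduces to part (i) plus disjointness.
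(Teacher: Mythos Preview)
Your proof is correct and follows essentially the same approach as the paper. For (i) you exhibit a common positive integer multiple $mx=ny$ lying in both pieces, exactly as the paper does; for (ii) you invoke density of $c\cdot\Q^+$ in the relevant half-line together with the Baire category theorem, which is precisely what the paper's terse phrase ``residual in $(-\infty,0)$ or in $(0,\infty)$'' encodes---your version simply spells out the contradiction (two disjoint dense $G_\delta$ sets in a Baire space) more explicitly.
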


\begin{proof}
(i) Suppose that $x,y\in\R$, $x/y\in\Q^+$, $x\in A_\al$ and $y\in A_\be$.
Since $x/y\in\Q^+$, $x$ and $y$ has a common positive integer multiple $z$.
But then $z$ is both in $A_{\al}$ and $A_{\be}$, so we have
$\al=\be$, which completes the proof of (i). 

(ii) If each $A_{\al}$ is a nonempty $G_\de$ set then, by (i), each $A_\al$ is 
residual in $(-\infty,0)$ or in $(0,\infty)$ or equals to $\{0\}$,
which completes the proof of (ii).
\end{proof}

Combining the results of this section and the previous one, in case of
$\cont=\omega_2$ we can exactly determine how many Borel additive semigroup 
one can decompose $\R$ into:

\begin{cor}
Let $\iK$ be the set of cardinalities of all possible decompositions of
$\R$ into Borel additive semigroup; that is, let 
$$
\iK=\{\ka: \exists \textrm{ Borel decomposition }
\cup_{\al<\ka} B_{\al} =\R \textrm{ with } 
(\forall\al) B_\al+B_\al\su B_\al, B_\al\neq \emptyset
\}.
$$
If $\cont=\omega_2$ then $\iK=\{1,2,3,\om_1,\om_2\}$ or 
$\iK=\{1,2,3, \om_2\}$ and both possibilities are consistent.
\end{cor}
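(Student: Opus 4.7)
The plan is to combine the structural restriction from Corollary~\ref{c:addkappa} with the existence result Theorem~\ref{t:main} and a complementary independence argument using Martin's axiom. First, regardless of any set-theoretic assumptions, the trivial decompositions $\R$, $(-\infty,0]\cup(0,\infty)$, and $(-\infty,0)\cup\{0\}\cup(0,\infty)$ witness $\{1,2,3\}\subseteq\iK$. The value $\om_2=\cont$ is also forced to lie in $\iK$: choosing a transversal for the equivalence $x\sim y\iff x/y\in\Q^+$ on $(0,\infty)$ partitions $(0,\infty)$ into $\cont$ many countable (hence Borel) additive semigroups of the form $c\cdot\Q^+$, and adjoining $\{0\}$ and $(-\infty,0)$ yields a Borel additive-semigroup decomposition of $\R$ into $\cont=\om_2$ nonempty pieces.

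For the upper bound on $\iK$, every Borel partition of $\R$ has at most $\cont=\om_2$ parts because Borel sets are coded by reals. By Corollary~\ref{c:addkappa}, any $\ka\in\iK$ with $\ka\ge 4$ must satisfy $\ka\ge\max(\cov\iN,\cov\iM)\ge\om_1$, so together these give $\iK\subseteq\{1,2,3,\om_1,\om_2\}$. Hence the only remaining question is whether $\om_1\in\iK$, and both answers must be shown consistent.

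The consistency of $\iK=\{1,2,3,\om_1,\om_2\}$ is essentially the content of Theorem~\ref{t:main}: the Ciesielski--Pawlikowski ground model already satisfies $\cont=\om_2$, so I would simply combine the partition of Theorem~\ref{t:main} with the trivial small-$\ka$ decompositions and the $c\cdot\Q^+$ construction above to realize every element of $\{1,2,3,\om_1,\om_2\}$ simultaneously. For the consistency of $\iK=\{1,2,3,\om_2\}$, I would work in any model of $MA+\cont=\om_2$, for instance the standard Solovay--Tennenbaum model. Under $MA$ one has $\cov\iN=\cov\iM=\cont=\om_2$, so Corollary~\ref{c:addkappa} upgrades any $\ka\in\iK$ with $\ka\ge 4$ to $\ka\ge\om_2$, ruling out $\om_1\in\iK$; meanwhile $\{1,2,3,\om_2\}\subseteq\iK$ by the observations of the first paragraph.

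There is no genuine obstacle here: all the hard work has already been done in Corollary~\ref{c:addkappa} and Theorem~\ref{t:main}. The only ``care'' required is to check that each model in question really contains witnesses for \emph{every} element of the claimed $\iK$, which is routine given that the small-$\ka$ and $\ka=\cont$ witnesses are absolute.
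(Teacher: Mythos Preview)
Your proposal is correct and follows essentially the same approach as the paper: use Corollary~\ref{c:addkappa} and the trivial bound $\ka\le\cont$ to get $\iK\subseteq\{1,2,3,\om_1,\om_2\}$, realise $\{1,2,3,\om_2\}$ by explicit decompositions, invoke Theorem~\ref{t:main} for the consistency of $\om_1\in\iK$, and use a model with $\cov\iN=\cov\iM=\cont=\om_2$ (you take $MA+\cont=\om_2$, the paper just cites \cite{BJ}) for the consistency of $\om_1\notin\iK$. Your witness for $\om_2\in\iK$ via $c\cdot\Q^+$ on $(0,\infty)$ together with $\{0\}$ and $(-\infty,0)$ is in fact slightly cleaner than the paper's, which uses the relation $x/y\in\Q$ on all of $\R$ and thereby has to contend with $x+(-x)=0$ escaping the class.
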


\begin{proof}
Suppose that $\cont=\omega_2$.
Since $\max(\cov\iM, \cov\iN)\ge\om_1$, Corollary~\ref{c:addkappa}
implies that $\iK\su\{1,2,3,\om_1,\om_2\}$. The trivial decompositions 
$\R=\R$, $\R=(-\infty,0)\cup[0,\infty)$, 
$\R=(-\infty,0)\cup\{0\}\cup(0,\infty)$ show that 
$\iK\supset\{1,2,3\}$.
Let $\{J_\al:\al<\cont\}$ be the equivalence classes of the relation
$x\sim y \Leftrightarrow (x=y=0 \textrm{ or } x/y\in\Q)$. 
Then each $J_\al$ is countable and closed
under addition, so the decomposition $\R=\bigcup_{\al<\cont} J_\al$ shows that
$\om_2=\cont\in\iK$.

Thus it remains to prove that both $\omega_1\in\iK$ and $\omega_1\not\in\iK$
are consistent if $\cont=\omega_2$. Theorem~\ref{t:main} shows the consistency
of $\omega_1\in\iK$. It is well known (see \cite{BJ}) that it is consistent
that $\cov\iM=\cov\iN=\om_2=\cont$. But in this case, 
by Corollary~\ref{c:addkappa}, $\omega_1\not\in\iK$, which completes the proof.
\end{proof}

\section{Open questions}

In this final section we collect some of the numerous remaining open questions.

\begin{question}
Is it consistent that $\cont = \om_3$ and
the real line can be partitioned into $\om_2$ additive Borel (or $F_\si$) semigroups;
that is, there exists a decomposition 
$\R=\bigcup_{\al<\om_1} A_{\al}$ such that each $A_{\al}$ is a
nonempty Borel (or $F_{\si}$) set
closed under addition?
\end{question}

We remark here that it is not hard to see from \cite{Sn} or \cite{Mi} that it is consistent that $\cont = \om_3$ and
the real line can be partitioned into $\om_2$ compact sets.

\begin{question}
What can we say about the following cardinal invariant?
$$
\gamma=\min\{\ka: \ka>3, \exists \textrm{ Borel dec. }
\cup_{\al<\ka} B_{\al} =\R \textrm{ with }
(\forall\al) B_\al+B_\al\su B_\al, B_\al\neq \emptyset\}.
$$
\end{question}

By Corollary~\ref{c:addkappa} we know that $\gamma\ge\max(\cov\iN,\cov\iM)$
and by Theorem~\ref{t:main} we know that it is consistent
that $\cont=\om_2$ and $\gamma=\om_1$. What else can we say?

\smallskip

As we pointed out above, by Silver's theorem the construction in Theorem \ref{t:main} cannot yield a Borel equivalence relation. (Two real numbers are equivalent if they are in the same piece of the decomposition, and the equivalence relation is Borel if it is Borel when considered as a subset of the plane.) Actually, Silver proved his theorem for so called co-analytic equivalence relations, see \cite{Ke} for the definition and some background in descriptive set theory.
Now we remark that our construction is not analytic either. Indeed, by a result of Stern \cite{Sn2} if each equivalence class of an analytic equivalence relation is $F_\si$ then there are countably many or continuum many classes. However, we do not know the answer to the following.

\begin{question}
Is it consistent that $\cont = \om_2$ and
the real line can be partitioned into $\om_1$ additive Borel semigroups so that the resulting equivalence relation is analytic?
\end{question}

It would also be interesting to check whether our construction consistently produces a projective equivalence relation.

\medskip

The following question is closely related to our topic in that it also 
requires some extra condition using the additive structure of the real
line.

\begin{question}
Suppose that 
$\R = \bigcup_{\al<\kappa} B_\al$ is a decomposition into Borel sets that
are \emph{translates} of each other. Does this imply that $\kappa \le \om$ is
or $\ka = \cont$?
\end{question}

First we show that $\ka\le\om$ or $\ka\ge\cov\iN$, so 
an affirmative answer is consistent with ZFC.
Indeed, if $B_\al$ has measure zero then clearly $\ka\ge\cov\iN$.
If $B_\al$ has positive measure then we have $\ka$ disjoint translates
of a set of positive measure, and it is easy to check that this implies $\ka\le\om$.

We also claim that the answer is affirmative in $ZFC$ if the sets $B_\al$
are $F_\si$. 
Indeed, suppose that $\om < \kappa$. By \cite[Thm. 1]{Ba} a $\si$-compact set has either at most countably many or continuum many pairwise disjoint translates, hence $B_0$ has continuum many pairwise disjoint translates 
$\{B_0+x_\al: \al<\cont\}$. 
One can easily check that this implies that no translate of $B_0$ can contain more
than one number $-x_\al$, 
hence less than $\cont$ many translates of $B_0$ cannot cover $\R$, therefore $\kappa = \cont$. 

This proof also raises the following natural question.

\begin{question}
Suppose that a Borel subset of $\R$ has uncountably many pairwise disjoint translates. Does it also have continuum many pairwise disjoint translates?
\end{question}

The last question is the natural continuation of the question of Freud.

\begin{question}
Suppose that 
$(0, \infty) = \bigcup_{\al<\kappa} B_\al$ is a decomposition into nonempty Borel sets that
are closed under both addition and multiplication. Does this imply that $\kappa = 1$ or $\ka = \cont$?
\end{question}

We remark here that the case $\ka = \cont$ is possible indeed, as was pointed out by Andr\'as M\'ath\'e \cite{Ma}. Let $\iG$ be the smallest family of real functions closed under addition and multiplication and also forming a group under composition (in particular, $\iG$ contains the identity function). Note that this makes sense, since we can generate this family inside the group of strictly increasing functions. Then $\iG$ is clearly countable and it is easy to see that the orbits of $\iG$ form a partition of $\RR$ into $\cont$ many countable sets all of which are closed under addition and multiplication.

\end{document}